\DeclareMathOperator*\Ker{ker}%
\DeclareMathOperator*\Real{Re}%
\begin{document}

\title{On the best Ulam constant of the linear differential operator with constant coefficients}

\author{Alina Ramona Baias    \and
        Dorian Popa}

\institute{Alina Ramona Baias \at
              Technical University of Cluj-Napoca,
Department of Mathematics \\
G. Bari\c tiu No.25, 400027, Cluj-Napoca, Romania
           \and
           Dorian Popa \at
              Technical University of Cluj-Napoca,
Department of Mathematics \\
G. Bari\c tiu No.25, 400027, Cluj-Napoca, Romania
}

\date{Received: date / Accepted: date}

\maketitle

\begin{abstract}
The linear differential operator with constant coefficients
$$D(y)=y^{(n)}+a_1 y^{(n-1)}+\ldots+a_n y,\quad y\in \mathcal{C}^{n}(\mathbb{R}, X)$$ acting in a Banach space
$X$ is Ulam stable if and only if its characteristic equation has no roots on the imaginary axis. We prove that
if the characteristic equation of $D$ has distinct roots $r_k$ satisfying $\Real r_k>0,$ $1\leq k\le n,$ then the
best Ulam constant of $D$ is $K_D=\frac{1}{|V|}\int_{0}^{\infty}\left|\sum\limits_{k=1}^n(-1)^kV_ke^{-r_k
x}\right|dx,$ where $V=V(r_1,r_2,\ldots,r_n)$ and $V_k=V(r_1,\ldots,r_{k-1},r_{k+1}, \ldots, r_n),$ $1\leq k\leq
n,$ are Vandermonde determinants.
\keywords{Linear differential operator,  Ulam stability, Best constant, Banach space}

\subclass{34D20, 39B82}
\end{abstract}

\section{Introduction}

In this paper, we denote by $\mathbb{K}$ the field of real numbers $\mathbb{R}$ or the field of complex numbers
$\mathbb{C}$.
 Let $M$ and $N$ be two linear spaces over the field $\mathbb{K}.$

\begin{definition}\label{def1}
A function $\rho_{M}:M\to[0,\infty]$ is called a \emph{gauge} on $M$ if the following properties hold:
\begin{enumerate}
\item[i)] $\rho_M(x)=0$ if and only if $x=0$;
\item[ii)] $\rho_M(\lambda x)=|\lambda|\rho_M(x)$ for all $x\in M,$ $\lambda\in\mathbb{K},$ $\lambda\neq0.$
\end{enumerate}
\end{definition}

Throughout this paper we denote by $(X,\|\cdot\|)$ a Banach space over the field $\mathbb{C}$ and by
$\mathcal{C}^{n}(\mathbb{R}, X)$ the linear space of all $n$ times differentiable functions with continuous $n$-th
derivatives, defined on $\mathbb{R}$ with values in $X$. $\mathcal{C}^{0}(\mathbb{R}, X)$ will be denoted as usual
by $\mathcal{C}(\mathbb{R}, X).$ For $f\in\mathcal{C}^{n}(\mathbb{R}, X)$ define
\begin{equation}\label{norma}
\|f\|_{\infty}=\sup\{\|f(t)\|:t\in \mathbb{R}\}.
\end{equation}
Then $\|f\|_{\infty}$ is a gauge on $\mathcal{C}^{n}(\mathbb{R}, X).$ We suppose that $\mathcal{C}^{n}(\mathbb{R},
X)$ and $\mathcal{C}(\mathbb{R}, X)$ are endowed with the same gauge $\|\cdot\|_{\infty}.$

Let $\rho_M,\rho_N$  be two gauges on the linear spaces $M$ and $N,$ respectively and let $L:M\to N$ be a linear
operator.

We denote by
$\Ker{L}=\{x\in M|Lx=0\}$ and $R(L)=\{Lx|x\in M\}$ the kernel and the range of the operator $L$, respectively.

\begin{definition}\label{def2}
We say that the operator $L$ is Ulam stable if there exists $K\geq0$ such that for every $\varepsilon>0$ with
$\rho_N(Lx)\leq \varepsilon$ there exists $z\in\Ker{L}$ with the property $\rho_M(x-z)\leq K\varepsilon.$
\end{definition}

The Ulam stability of the operator $L$ is equivalent with the stability of the associated equation $Lx=y,$ $y\in
R(L).$
 An element $x\in M$ satisfying $\rho_N(Lx)\leq \varepsilon$ for some positive $\varepsilon$ is called an
 approximate solution of the equation $Lx=y,$ $y\in R(L).$ Consequently, Definition \ref{def2} can be reformulated
 as follows: The operator $L$ is Ulam stable if for every approximate solution of $Lx=y,$ $y\in R(L)$ there exists
 an exact solution of the equation near it. The problem of Ulam stability can be traced back to 1940 and is due to
 Ulam \cite{ulam}. Ulam formulated this problem during a conference at Madison University, Wisconsin, for the
 equation of the homomorphisms of a metric group. The first answer to Ulam's question was given by D.H. Hyers for
 the Cauchy functional equation in Banach spaces in \cite{hyers1}. In fact, a problem of this type was formulated
 in the famous book by Polya and Szeg\"{o} for the Cauchy functional equation on the set of integers; see
 \cite{polya}. Since than, this research area received a lot of attention and was extended to the context of
 operators, functional, differential or difference equations. For a broad overview on the topic we refer the reader
 to \cite{brzdek, hyers}.

The number $K$ from Definition \ref{def2} is called an \emph{Ulam constant} of $L.$ In what follows the infimum of
all Ulam constants of $L$ is denoted by $K_L$. Generally, the infimum of all Ulam constants of the operator $L$ is
not an Ulam constant of $L$ (see \cite{hatori, popa1}) but if it is, it will be called \emph{the best Ulam
constant} of $L,$ or simply \emph{the Ulam constant} of the operator $L$. Finding the best Ulam constant of an
equation or operator is a challenging problem because it offers the best measure of the error between the
approximate and the exact solution.
For linear and bounded operators acting on normed spaces in \cite{hatori, takahasi} is given a characterization of
their Ulam stability as well as a representation of their best Ulam constant. Using this result D. Popa and I. Ra\c
sa obtained the best Ulam constant for Bernstein, Kantorovich and Stancu operators; see \cite{popa, popa2, popa3,
popa4}. For more information on Ulam stability with respect to gauges and on the best Ulam constant of linear
operators we refer the reader to \cite{brzdek, popa-rasa}.

To the best of our knowledge the first result on Ulam stability of differential equations was obtained by M.
Ob{\l}oza \cite{obloza}. Thereafter, the topic was deeply investigated by T. Miura, S. Miyajima, S.E. Takahasi in
\cite{miura, miura1, takagi} and  S. M Jung in \cite{jung}, who gave results for various differential equations and
partial differential equations.
For further details on Ulam stability we refer the reader to \cite{brzdek, hyers, ulam}.

Let $a_1,\ldots,a_n\in\mathbb{C}$ and consider the linear differential operator $D:\mathcal{C}^{n}(\mathbb{R},
X)\to \mathcal{C}(\mathbb{R}, X)$ defined by
\begin{equation}\label{operator}
D(y)=y^{(n)}+a_1 y^{(n-1)}+\ldots+a_n y,\quad y\in \mathcal{C}^{n}(\mathbb{R}, X).
\end{equation}
Denote by $P(z)=z^n+a_1 z^{n-1}+\ldots+a_n$ the characteristic polynomial of the operator $D$ and let
$r_1,\ldots,r_n$ be the complex roots of the characteristic equation $P(z)=0.$

The problem of finding the best Ulam constant was first posed by Th. Rassias in \cite{Rassias}. Since than, various
papers on this topic appeared, but there are only few results on the best Ulam constant of differential equations
and differential operators.
In the sequel we will provide a short overview of some important results concerning the Ulam stability and best
Ulam constant of the differential operator $D.$
In \cite{miura1} is proved that the operator $D$ is Ulam stable with the Ulam constant
$\frac{1}{\prod\limits_{k=1}^{n}|\Real r_k|}$ if and only if its characteristic equation has no roots on the
imaginary axis.
In \cite{popa} D. Popa and I. Ra\c sa obtained sharp estimates for the Ulam constant of the first order linear
differential operator and the higher order linear differential operator with constant coefficients. The best Ulam
constant of the first order linear differential operator with constant coefficients is obtained in \cite{miura}.
Later, A.R. Baias and D. Popa obtained the best Ulam constant for the second order linear differential operator
with constant coefficients \cite{baias-popa2}. Recent results on Ulam stability for linear differential equations
with periodic coefficients and on the best constant for Hill's differential equation were obtained by R. Fukutaka
and M. Onitsuka in \cite{fukutaka, fukutaka1}.
Important steps in finding the best Ulam constant were made also for higher order difference equations with
constant coefficients. For details we refer the reader to \cite{baias} and the references therein.

The aim of this paper is to determine the best Ulam constant for the $n$ order linear differential operator with
constant coefficients acting in Banach spaces, for the case of distinct roots of the characteristic equation.
Through this result we improve and complement some extant results in the field.

\section{Main results}

Let $a_1,\ldots,a_n\in\mathbb{C}$ and consider the linear differential operator $D:\mathcal{C}^{n}(\mathbb{R},
X)\to \mathcal{C}(\mathbb{R}, X)$ defined by
\begin{equation}\label{operator}
D(y)=y^{(n)}+a_1 y^{(n-1)}+\ldots+a_n y,\quad y\in \mathcal{C}^{n}(\mathbb{R}, X).
\end{equation}

If $r_1,r_2,\ldots,r_n $ are distinct roots of the characteristic equation $P(z)=0$, then the general solution of
the homogeneous equation $D(y)=0$ is given by

\begin{equation}\label{sol eqH}
y_H(x)=C_1e^{r_1x}+C_2e^{r_2x}+\cdots +C_ne^{r_n x},
\end{equation}
 where $\mathcal{C}_1,\ldots,\mathcal{C}_n\in X$ are arbitrary constants.
Consequently
\begin{equation}\label{sol eqH-ker}
  \Ker D=\left\{\sum\limits_{k=1}^nC_ke^{r_kx}|C_1,C_2,\ldots,C_n\in X\right\}.
\end{equation}
 The operator $D$ is surjective, so according to the variation of constants method, for every
 $f\in\mathcal{C}(\mathbb{R}, X)$ there exists a particular solution of the equation $D(y)=f$ of the form
 \begin{eqnarray*}
 y_P(x)=\sum\limits_{k=1}^nC_k(x)e^{r_kx},\quad x\in\mathbb{R},
 \end{eqnarray*}
 where $\mathcal{C}_1,\ldots, \mathcal{C}_n$ are functions of class $\mathcal{C}^{1}(\mathbb{R}, X)$ which satisfy

 \begin{equation}\label{eq3-matrice}
 \begin{pmatrix}e^{r_1x}&e^{r_2x}&\ldots& e^{r_nx}\\r_1e^{r_1x}&r_2e^{r_2x}&\ldots&r_n e^{r_nx}\\ \ldots &\ldots&
 \ldots& \ldots\\
 r_1^{n-1}e^{r_1x}&r_2^{n-1}e^{r_2x}&\ldots& r_n^{n-1}e^{r_nx}\end{pmatrix}
 \begin{pmatrix}
   C_1^{\prime}(x) \\
     C_2^{\prime}(x) \\
    \vdots\\
     C_n^{\prime}(x)
 \end{pmatrix}=
 \begin{pmatrix}
   0 \\
   \vdots \\
   0 \\
   f(x)
 \end{pmatrix}, \quad x\in\mathbb{R}.
 \end{equation}
In what follows, we denote for simplicity the Vandermonde determinants by $V:=V(r_1,r_2,\ldots,r_n)$ and
$V_k:=V(r_1,r_2,\ldots,r_{k-1},r_{k+1},\ldots, r_n),$ $1\leq k\leq n.$ Consequently, we obtain
$$C_{k}^{\prime}(x)=(-1)^{n+k}\frac{V_k}{V}e^{-r_k x}f(x),\quad k=1,\ldots,n.$$

Hence, a particular solution of the equation $D(y)=f$ is given by
\begin{equation}\label{forma sol particulara}
y_{P}(x)=\frac{1}{V}\sum\limits_{k=1}^{n}(-1)^{n+k}V_k e^{r_kx}\int_{0}^{x}f(t)e^{-r_kt}dt,\quad x\in\mathbb{R}.
\end{equation}

  The main result concerning the Ulam stability of the operator $D$ for the case of distinct roots of the
  characteristic equation is given in the next theorem.
\begin{theorem}\label{th21}
Suppose that $r_k,$ $1\leq k\leq n,$ are distinct roots of the characteristic equation with $\Real r_k\neq 0$ and
let $\varepsilon>0.$ Then for every $y\in\mathcal{C}^{n}(\mathbb{R}, X)$ satisfying
\begin{equation}\label{condyth}
\|D(y)\|_{\infty}\leq\varepsilon
\end{equation}
there exists a unique $y_H\in\Ker D$ such that
\begin{equation}\label{conddify}
\|y-y_H\|_{\infty}\leq K\varepsilon
\end{equation}
where
\begin{align}
K&=
\left\{\begin{array}{ll}
\displaystyle\frac{1}{|V|}\displaystyle\int_{0}^{\infty}\bigg|\sum\limits_{k=1}^n(-1)^kV_ke^{-r_kx}\bigg|dx,&
\mbox{ if } \Real\limits_{\substack{ 1\leq k\leq n}}r_k>0;\\
\displaystyle\frac{1}{|V|}\displaystyle\int_{0}^{\infty}\bigg|\sum\limits_{k=1}^n(-1)^kV_ke^{r_kx}\bigg|dx,& \mbox{
if } \Real\limits_{\substack{ 1\leq k\leq n}}r_k<0;\\
\displaystyle\frac{1}{|V|}\displaystyle\int_{0}^{\infty}\left(\bigg|\sum\limits_{k=1}^p(-1)^kV_ke^{-r_kx}\bigg|+\bigg|\sum\limits_{k=p+1}^n(-1)^kV_ke^{r_kx}\bigg|\right)dx,&
\mbox{ if } \begin{array}{ll}\Real\limits_{\substack{ 1\leq k\leq p}}r_k>0;\\
 \Real\limits_{\substack{ p+1\leq k\leq n}}r_k<0.
 \end{array}
\end{array} \right.\nonumber\\
&\label{const-k}
\end{align}
\end{theorem}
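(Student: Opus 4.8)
The plan is to construct an explicit exact solution $y_H \in \Ker D$ for a given approximate solution $y$, estimate the error, and then show the constant $K$ in \eqref{const-k} cannot be lowered. For the construction, I would start from the observation that any $y$ with $\|D(y)\|_\infty \le \varepsilon$ can be written as $y = y_P + y_H^{(0)}$ where $y_P$ is the particular solution \eqref{forma sol particulara} with $f = D(y)$ and $y_H^{(0)} \in \Ker D$. The issue is that $y_P$ as written in \eqref{forma sol particulara} need not be bounded, so the first key step is to \emph{rewrite each term} $\frac{1}{V}(-1)^{n+k}V_k e^{r_k x}\int_0^x f(t)e^{-r_k t}\,dt$ by shifting the lower limit of integration: when $\Real r_k > 0$, replace $\int_0^x$ by $-\int_x^\infty$ (adding a $\Ker D$ correction term $\frac{1}{V}(-1)^{n+k}V_k e^{r_k x}\int_0^\infty f(t)e^{-r_k t}\,dt$); when $\Real r_k < 0$, keep $\int_0^x$ but in the form $\int_{-\infty}^x$ after a similar correction. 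After these adjustments one obtains a \emph{bounded} particular solution $\tilde y_P$, and setting $y_H := y - \tilde y_P$ gives an element of $\Ker D$.

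**Next I would** estimate $\|y - y_H\|_\infty = \|\tilde y_P\|_\infty$. In the all-roots-positive case, $\tilde y_P(x) = -\frac{1}{V}\sum_{k=1}^n (-1)^{n+k} V_k e^{r_k x}\int_x^\infty f(t)e^{-r_k t}\,dt$; substituting $t = x + s$ turns this into $-\frac{1}{V}\int_0^\infty \big(\sum_{k=1}^n (-1)^{n+k} V_k e^{-r_k s}\big) f(x+s)\,ds$, so that $\|\tilde y_P(x)\| \le \frac{1}{|V|}\int_0^\infty \big|\sum_{k=1}^n (-1)^k V_k e^{-r_k s}\big|\,ds \cdot \|f\|_\infty \le K\varepsilon$, using $|(-1)^{n+k}| = |(-1)^k|\cdot|(-1)^n|$ and pulling the constant $|(-1)^n|=1$ out. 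The mixed case splits the sum at index $p$ and applies the two substitutions ($t=x+s$ for $k\le p$, $t=x-s$ for $k>p$) to the respective blocks, and the all-negative case is symmetric. One must check the integral defining $K$ is finite: each summand decays exponentially since $\Real r_k \ne 0$, so this is immediate. Uniqueness of $y_H$ follows because if $y_H^1, y_H^2 \in \Ker D$ both work, their difference is a bounded element of $\Ker D$, i.e. a bounded solution $\sum C_k e^{r_k x}$ of $D(y)=0$; since the $r_k$ are distinct with nonzero real parts, linear independence of the $e^{r_k x}$ forces all $C_k = 0$ (the standard argument: if say $\Real r_j$ is maximal among those with positive real part, dividing by $e^{r_j x}$ and letting $x \to +\infty$ isolates $C_j$).

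**The part I expect to be the main obstacle** is proving that $K$ is the \emph{best} constant, i.e. sharpness — showing no $K' < K$ works. The natural approach is to produce, for each $\delta > 0$, an approximate solution $y$ with $\|D(y)\|_\infty \le \varepsilon$ whose distance to $\Ker D$ exceeds $(K-\delta)\varepsilon$. Since $X$ is a complex Banach space, fix a unit vector $e \in X$ and a unit functional; the idea is to choose $f = D(y)$ of the form $f(t) = \varepsilon\, \phi(t)\, e$ where $\phi$ is scalar-valued with $\|\phi\|_\infty \le 1$ chosen to nearly align the sign of $\phi(x_0 + s)$ with the complex argument of $\overline{\sum_k (-1)^{n+k} V_k e^{-r_k s}}$ on a large interval $s \in [0, T]$, so that $\|\tilde y_P(x_0)\|$ approaches $\frac{1}{|V|}\int_0^T |\sum_k (-1)^k V_k e^{-r_k s}|\,ds \cdot \varepsilon$, which tends to $K\varepsilon$ as $T \to \infty$. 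Care is needed because the corresponding $y$ must lie in $\mathcal C^n(\mathbb R, X)$ and because $\tilde y_P$ is essentially the \emph{unique} bounded deviation (by the uniqueness already established), so $\mathrm{dist}(y, \Ker D) = \|\tilde y_P\|_\infty$ exactly — this is what lets the lower bound on $\|\tilde y_P(x_0)\|$ translate into a lower bound on the Ulam constant. Approximating the ideal (discontinuous, unimodular) multiplier $\phi$ by a smooth one with norm $\le 1$ while controlling the loss is the technical crux; the mixed-root case requires doing this on two blocks simultaneously, which is where I would expect to spend the most effort.
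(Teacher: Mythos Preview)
Your existence and uniqueness arguments are essentially identical to the paper's proof: the paper likewise writes $y$ as homogeneous plus particular solution, absorbs the improper integrals $\int_0^\infty f(t)e^{-r_kt}\,dt$ (respectively $\int_{-\infty}^0$) into modified constants $\widetilde{C_k}$, performs the same substitution $u=t-x$ (or $u=x-t$) to reach the convolution form, and deduces uniqueness from the fact that any nonzero element of $\Ker D$ is unbounded.

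One remark: Theorem~\ref{th21} does \emph{not} assert that $K$ is optimal --- sharpness is the content of the separate Theorem~\ref{th22} --- so your third paragraph is not needed for the present statement. For the record, your sharpness outline is also aligned with the paper's proof of Theorem~\ref{th22}: there the explicit continuous approximation to the ideal unimodular multiplier is $f(x)=\dfrac{\overline{h(x)}}{|h(x)|+\theta e^{-x}}\,s$ with $\theta\to 0$, which plays exactly the role of your smoothed $\phi$, and the mixed case is handled by piecing two such expressions together with an affine bridge on $(-\theta,\theta)$.
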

\begin{proof}
{\bf Existence.}
Suppose that $y\in \mathcal{C}^{n}(\mathbb{R}, X)$ satisfies (\ref{condyth}) and let $D(y)=f.$ Then
$\|f\|_{\infty}\leq\varepsilon$ and
$$y(x)=\sum\limits_{k=1}^{n}C_ke^{r_kx}+\frac{1}{V}\sum\limits_{k=1}^{n}(-1)^{n+k}V_ke^{r_kx}\int_{0}^{x}f(t)e^{-r_kt}dt,\
x\in\mathbb{R},$$
for some $C_k\in X,$ $1\leq k\leq n.$
\begin{enumerate}
\item[i)]
Let first $\Real r_k>0,$ $1\leq k\leq n$. Define $y_H\in Ker D$ by the relation
$$y_H(x)=\sum\limits_{k=1}^{n}\widetilde{C_k}e^{r_kx},x\in\mathbb{R},\quad \widetilde{C_k}\in X,$$
where
\begin{eqnarray*}
\widetilde{C_k}=C_k+(-1)^{n+k}\frac{V_k}{V}\int_{0}^{\infty}f(t)e^{-r_kt}dt,\quad 1\leq k\leq n.
\end{eqnarray*}

Since $\|f(t)e^{-r_kt}\|\leq\varepsilon|e^{-r_kt}|=\varepsilon e^{-t\Real r_k},$ $t\geq 0,$ and
$\int_{0}^{\infty}e^{-t\Real r_k}dt$ is convergent it follows that $\int_{0}^{\infty}f(t)e^{-r_kt}dt$ is
absolutely convergent, so the constants $\widetilde{C_k},$ $1\leq k\leq n$ are well defined. Then:
\begin{eqnarray*}
y(x)-y_{H}(x)&=&\sum\limits_{k=1}^{n}C_ke^{r_kx}+\frac{1}{V}\sum\limits_{k=1}^{n}(-1)^{n+k}V_ke^{r_kx}\int_{0}^{x}f(t)e^{-r_kt}dt-\sum\limits_{k=1}^{n}\widetilde{C_k}e^{r_kx}\\
&=&\sum\limits_{k=1}^{n}C_ke^{r_kx}+\frac{1}{V}\sum\limits_{k=1}^{n}(-1)^{n+k}V_k
e^{r_kx}\int_{0}^{x}f(t)e^{-r_kt}dt\\
&-&\sum\limits_{k=1}^{n}C_ke^{r_kx}-\frac{1}{V}\sum\limits_{k=1}^{n}(-1)^{n+k}V_k
e^{r_kx}\int_{0}^{\infty}f(t)e^{-r_kt}dt\\
&=&-\frac{1}{V}\sum\limits_{k=1}^{n}(-1)^{n+k}V_ke^{r_kx}\int_{x}^{\infty}f(t)e^{-r_kt}dt\\
&=&-\frac{1}{V}\sum\limits_{k=1}^{n}(-1)^{n+k}V_k\int_{x}^{\infty}f(t)e^{r_k(x-t)}dt,\quad x\in\mathbb{R}.
\end{eqnarray*}
Now, letting $t-x=u$ in the above integral we obtain
\begin{eqnarray*}
y(x)-y_H(x)&=&-\frac{1}{V}\sum\limits_{k=1}^{n}(-1)^{n+k}V_k\int_{0}^{\infty}f(u+x)e^{-r_ku}du,\\
&=& \frac{(-1)^{n+1}}{V}\int_{0}^{\infty}\left(\sum\limits_{k=1}^{n}(-1)^k V_k e^{-r_ku}\right)f(u+x)du, \quad
x\in\mathbb{R}.
\end{eqnarray*}
Hence
\begin{eqnarray*}
\|y(x)-y_{H}(x)\|&\leq &\int_{0}^{\infty}\bigg|\frac{1}{V}\sum\limits_{k=1}^{n}(-1)^{k}V_k
e^{-r_ku}\bigg|\cdot\big\|f(u+x)\big\|du,\quad x\in\mathbb{R}\\
&\leq& \frac{\varepsilon}{|V|} \int_{0}^{\infty}\bigg|\sum\limits_{k=1}^{n}(-1)^{k}V_ke^{-r_ku}\bigg|du,\quad
x\in\mathbb{R},
\end{eqnarray*}
therefore
$$\|y-y_0\|_{\infty}\leq K\varepsilon.$$
\item[ii)]

Let $\Real r_k<0,$ $1\leq k\leq n.$ The proof follows analogously, defining
$$y_H(x)=\sum\limits_{k=1}^{n}\widetilde{C_k}e^{r_kx},x\in\mathbb{R},\quad \widetilde{C_k}\in X,$$
with
\begin{eqnarray*}
\widetilde{C_k}=C_k-(-1)^{n+k}\frac{V_k}{V}\int_{-\infty}^{0}f(t)e^{-r_kt}dt\quad 1\leq k\leq n.
\end{eqnarray*}

Then
\begin{eqnarray*}
y(x)-y_{H}(x)
&=&\sum\limits_{k=1}^{n}C_ke^{r_kx}+\frac{1}{V}\sum\limits_{k=1}^{n}(-1)^{n+k}V_ke^{r_kx}\int_{0}^{x}f(t)e^{-r_kt}dt\\
&-&\sum\limits_{k=1}^{n}C_ke^{r_kx}+\frac{1}{V}\sum\limits_{k=1}^{n}(-1)^{n+k}V_ke^{r_kx}\int_{-\infty}^{0}f(t)e^{-r_kt}dt\\
&=&\frac{1}{V}\sum\limits_{k=1}^{n}(-1)^{n+k}V_k e^{r_kx}\int_{-\infty}^{x}f(t)e^{-r_kt}dt,\\
&=&\frac{1}{V}\sum\limits_{k=1}^{n}(-1)^{n+k}V_k \int_{-\infty}^{x}f(t)e^{r_k(x-t)}dt,\\
&=&\frac{(-1)^n}{V}\int_{0}^{\infty}\left(\sum\limits_{k=1}^{n}(-1)^{k}V_k e^{r_ku}\right)f(x-u)du,\quad
x\in\mathbb{R},
\end{eqnarray*}
where $u=x-t.$
Hence
\begin{eqnarray*}
\|y(x)-y_{H}(x)\|
&\leq&\frac{1}{|V|}\int_{0}^{\infty}\bigg|\sum\limits_{k=1}^{n}(-1)^{k}V_ke^{r_ku}\bigg|\cdot
\big\|f(x-u)\big\|du=K\varepsilon, x\in\mathbb{R},
\end{eqnarray*}
which entails
$$\|y-y_H\|_{\infty}\leq K\varepsilon.$$
\item[iii)]
Let $\Real r_k>0,$ $1\leq k\leq p,$ and $\Real r_k<0,$ $p+1\leq k\leq n.$ Define $y_H$ by the relation
$$y_H(x)=\sum\limits_{k=1}^{n}\widetilde{C_k}e^{r_kx},x\in\mathbb{R},\quad \widetilde{C_k}\in X,$$
with
\begin{eqnarray*}
\widetilde{C_k}&=&C_k+(-1)^{n+k}\frac{V_k}{V}\int_{0}^{\infty}f(t)e^{-r_kt}dt,\quad 1\leq k\leq p,\\
\widetilde{C_k}&=&C_k-(-1)^{n+k}\frac{V_k}{V}\int_{-\infty}^{0}f(t)e^{-r_kt}dt,\quad p+1\leq k\leq n.
\end{eqnarray*}

Then
\[y(x)-y_{H}(x)=\frac{1}{V}\sum\limits_{k=1}^{n}(-1)^{n+k}V_k
e^{r_kx}\int_{0}^{x}f(t)e^{-r_kt}dt\] \[-\frac{1}{V}\sum\limits_{k=1}^{p}(-1)^{n+k}V_k
e^{r_kx}\int_{0}^{\infty}\!f(t)e^{-r_kt}dt +\frac{1}{V}\sum\limits_{k=p+1}^{n}(-1)^{n+k}V_k e^{r_kx}\int_{-\infty}^{0}\!f(t)e^{-r_kt}dt,\]
\[=-\frac{1}{V}\!\sum\limits_{k=1}^{p}(-1)^{n+k}V_k
e^{r_kx}\!\!\int_{x}^{\infty}\!f(t)e^{-r_kt}dt+\frac{1}{V}\!\sum\limits_{k=p+1}^{n}(-1)^{n+k}V_k
e^{r_kx}\!\!\int_{-\infty}^{x}\!f(t)e^{-r_kt}dt\]
\[=-\frac{1}{V}\sum\limits_{k=1}^{p}(-1)^{n+k}V_k
\int_{x}^{\infty}f(t)e^{r_k(x-t)}dt+\frac{1}{V}\sum\limits_{k=p+1}^{n}(-1)^{n+k}V_k\int_{-\infty}^{x}\!f(t)e^{r_k(x-t)}dt.
\]
Letting $x-t=-u,$ respectively $x-t=u$ in the previous integrals it follows
\begin{align*}
y(x)-y_{H}(x)&=-\frac{1}{V}\sum\limits_{k=1}^{p}(-1)^{n+k}V_k \int_{0}^{\infty}f(x+u)e^{-r_ku}du\\
&+
\frac{1}{V}\sum\limits_{k=p+1}^{n}(-1)^{n+k}V_k \int_{0}^{\infty}f(x-u)e^{r_ku}du,\quad x\in\mathbb{R}
\end{align*}
and
\begin{align*}
\|y(x)&-y_{H}(x)\|\leq \int_{0}^{\infty}\left(\bigg|\frac{1}{V}\sum\limits_{k=1}^{p}(-1)^{n+k}V_k
e^{-r_ku}\bigg|\big\|f(x+u)\big\|\right)du\\
&+\int_{0}^{\infty}\left(
\bigg|\frac{1}{V}\sum\limits_{k=p+1}^{n}(-1)^{n+k}V_k e^{r_ku}\bigg|\big\|f(x-u)\big\|\right)du\\
&\leq\frac{\varepsilon}{|V|}\int_{0}^{\infty}\left(\bigg|\sum\limits_{k=1}^{p}(-1)^{k}V_ke^{-r_ku}\bigg|+
\bigg|\sum\limits_{k=p+1}^{n}(-1)^{k}V_k e^{r_ku}\bigg|\right)du,\quad x\in\mathbb{R}.
\end{align*}
Therefore we get $$\|y-y_0\|_{\infty}\leq K\varepsilon.$$

The existence is proved.

{\bf Uniqueness.}
Suppose that for some $y\in\mathcal{C}^{n}(\mathbb{R}, X)$ satisfying (\ref{condyth}) there exist $y_1,
y_2\in\Ker D$ such that
$$\|y-y_j\|_{\infty}\leq K\varepsilon, \mbox{ \ \ \ }j=1,2.$$
Then
$$\|y_1-y_2\|_{\infty}\leq\|y_1-y\|_{\infty}+\|y-y_2\|_{\infty}\leq 2K\varepsilon.$$
But $y_1-y_2\in\Ker D,$ hence there exist $C_k\in X,$ $1\leq k\leq n$ such that
\begin{eqnarray}
y_1(x)-y_2(x)=\sum\limits_{k=1}^nC_ke^{r_kx},\quad x\in\mathbb{R}.
\end{eqnarray}

If $(C_1,C_2,\ldots,C_n)\neq (0,0,\ldots,0),$ then
$$\|y_1-y_2\|_{\infty}=\sup\limits_{x\in\mathbb{R}}\|y_1(x)-y_2(x)\|=+\infty,$$
contradiction with the boundedness of $y_1-y_2.$ We conclude that $C_k=0,$ $1\leq k\leq n,$ therefore $y_1=y_2.$
The theorem is proved.
\end{enumerate}
\end{proof}

\begin{theorem}\label{th22}
If $r_k$ are distinct roots of the characteristic equation with $\Real r_k\neq 0,$ $1\leq k\leq n,$ then the best
Ulam constant of $D$ is given by
\begin{align}
&K_D&=
\left\{\begin{array}{ll}
\displaystyle\frac{1}{|V|}\displaystyle\int_{0}^{\infty}\bigg|\sum\limits_{k=1}^n(-1)^kV_ke^{-r_kx}\bigg|dx,&
\mbox{if} \Real\limits_{\substack{ 1\leq k\leq n}}r_k>0;\\
\displaystyle\frac{1}{|V|}\displaystyle\int_{0}^{\infty}\bigg|\sum\limits_{k=1}^n(-1)^kV_ke^{r_kx}\bigg|dx,& \mbox{if} \Real\limits_{\substack{ 1\leq k\leq n}}r_k<0;\\
\displaystyle\frac{1}{|V|}\!\displaystyle\int_{0}^{\infty}\!\left(\bigg|\!\sum\limits_{k=1}^p(-1)^kV_ke^{-r_kx}\bigg|+\bigg|\!\sum\limits_{k=p+1}^n(-1)^kV_ke^{+r_kx}\bigg|\right)dx,&
\mbox{if}\!\begin{array}{ll}\Real\limits_{\substack{ 1\leq k\leq p}}r_k>0;\\
\!\Real\limits_{\substack{ p+1\leq k\leq n}}\!r_k<0.\end{array}
\end{array} \right.\nonumber \\
&\label{const-k1}&
\end{align}
\end{theorem}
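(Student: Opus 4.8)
The statement to prove is that the Ulam constant $K$ exhibited in Theorem \ref{th21} is in fact the \emph{best} one, i.e. equals the infimum $K_D$ of all Ulam constants of $D$; since Theorem \ref{th21} already gives $K_D\le K$, the plan is to establish $K_D\ge K$ and that the infimum is attained. The first step is a reduction. If $y\in\mathcal C^{n}(\mathbb R,X)$ satisfies $0<\|D(y)\|_\infty<\infty$, let $y_H\in\Ker D$ be the element constructed in Theorem \ref{th21}, for which $y-y_H$ is bounded. It is the only such element: for any other $z\in\Ker D$ the difference $z-y_H$ is a nonzero member of $\Ker D$, hence unbounded (exactly as in the uniqueness part of Theorem \ref{th21}), so $\|y-z\|_\infty=\infty$. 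Therefore $\inf_{z\in\Ker D}\|y-z\|_\infty=\|y-y_H\|_\infty$, which forces every Ulam constant $K'$ of $D$ to satisfy $\|y-y_H\|_\infty\le K'\|D(y)\|_\infty$; conversely, taking $z=y_H$ shows that $S:=\sup\{\|y-y_H\|_\infty/\|D(y)\|_\infty:0<\|D(y)\|_\infty<\infty\}$ is itself an Ulam constant. Hence $K_D=S$ and it is attained, Theorem \ref{th21} gives $S\le K$, and it remains to prove $S\ge K$, that is, to exhibit for each $\delta>0$ a $y$ with $\|D(y)\|_\infty\le\varepsilon$ and $\|y-y_H\|_\infty\ge(K-\delta)\varepsilon$.

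For the construction I would fix a unit vector $e\in X$ and look for $f=D(y)$ of the form $f(t)=\varepsilon\,\psi(t)\,e$ with $\psi\colon\mathbb R\to\mathbb C$ continuous and $\sup|\psi|\le1$, so that $\|D(y)\|_\infty\le\varepsilon$ is automatic. The crucial input is the integral representation of $y-y_H$ already established inside the proof of Theorem \ref{th21}: in case i ($\Real r_k>0$) one has $y(x)-y_H(x)=\int_0^{\infty}G(u)\,f(x+u)\,du$ with $G(u)=\frac{(-1)^{n+1}}{V}\sum_{k=1}^{n}(-1)^kV_k e^{-r_k u}$ and $\int_0^{\infty}|G(u)|\,du=K$; in case ii one gets the same with $f(x-u)$ and $G(u)=\frac{(-1)^{n}}{V}\sum_{k=1}^n(-1)^kV_k e^{r_k u}$; in case iii one gets a ``future'' term $\int_0^\infty G_1(u)f(x+u)\,du$ plus a ``past'' term $\int_0^\infty G_2(u)f(x-u)\,du$ with $\int_0^\infty(|G_1|+|G_2|)=K$. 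Evaluating at $x=0$ reduces the problem to choosing $\psi$ so that the relevant integral of $G$ (resp.\ $G_1,G_2$) against $f$ has norm close to $\varepsilon K$; the natural choice is to let $\psi$ coincide, on the half-line(s) where $f$ is actually sampled, with the unimodular factor $\overline G/|G|$ (in case iii, $\psi(t)=\overline{G_1(t)}/|G_1(t)|$ for $t\ge0$ and $\psi(t)=\overline{G_2(-t)}/|G_2(-t)|$ for $t\le0$), which makes the integrand nonnegative so that the integral recovers almost all of $\int_0^\infty|G|$ (resp.\ of $\int_0^\infty(|G_1|+|G_2|)$).

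The analytic point to be verified is that this phase-cancelling $\psi$ can be realized by a genuinely \emph{continuous} function with values in the closed unit disc, at the cost of only $\delta$. Here I would use that $G$ (and $G_1,G_2$) are nonzero exponential polynomials — the $V_k$ are nonzero and the $r_k$ distinct, so $G\not\equiv0$, which incidentally gives $K>0$ — hence have only finitely many zeros in any interval $[0,R]$. One takes $\psi=\overline G/|G|$ away from small symmetric neighbourhoods of those zeros, interpolates continuously across each neighbourhood keeping $|\psi|\le1$, and sets $\psi=0$ (after a short ramp) beyond $[0,R]$; then $\big|\int_0^\infty G\psi-K\big|\le 2\!\int_R^\infty|G|+2\!\int_{\text{(near the zeros)}}|G|$, which is $<\delta$ once $R$ is large and the neighbourhoods small. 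Consequently $\|y(0)-y_H(0)\|=\varepsilon\big|\int_0^\infty G\psi\big|>(K-\delta)\varepsilon$, hence $\|y-y_H\|_\infty\ge(K-\delta)\varepsilon$, and by the reduction $K_D=S\ge K-\delta$; letting $\delta\to0$ finishes the proof. Cases ii and iii run identically, building $\psi$ on the two half-lines in case iii and permitting a small transition near $0$, whose contribution is negligible.

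The step I expect to be the main obstacle is precisely this construction of $\psi$: producing a continuous, disc-valued function that uniformly well ``undoes'' the (in general complex, oscillatory) phase of the kernel on a large window, together with the book-keeping of all three cases — most delicately case iii, where the value at $x=0$ feels $f$ both to the right (through $G_1$) and to the left (through $G_2$) and the two prescriptions for $\psi$ need not agree at $0$. The remaining ingredients — identifying $\inf_{z\in\Ker D}\|y-z\|_\infty$ with $\|y-y_H\|_\infty$ and reusing the integral formula and the identity $\int_0^\infty|G|=K$ from Theorem \ref{th21} — are routine.
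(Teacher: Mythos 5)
Your proposal is correct and follows essentially the same route as the paper's proof: a test perturbation $f=\varepsilon\psi\,e$ whose phase cancels that of the kernel, the unboundedness of nonzero elements of $\Ker D$ to pin down the admissible $y_H$, and evaluation of the convolution representation at $x=0$ to force $K_D\ge K$ in all three cases. The only real difference is the regularization device: where you truncate at $R$ and excise neighbourhoods of the zeros of $G$ to keep $\psi$ continuous, the paper takes $f(x)=\frac{\overline{h(x)}}{|h(x)|+\theta e^{-x}}\,s$ (with the analogous two-sided construction and an affine bridge near $0$ in case iii), so continuity is automatic and the full mass $\int_0^\infty|h|$ is recovered by letting $\theta\to 0$ -- both devices do the job.
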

\begin{proof}

Suppose that $D$ admits an Ulam constant $K <K_D.$

\begin{enumerate}
\item[i)]
First, let $\Real r_k>0,$ $1\leq k\leq n.$ Then
$$K_{D}=\frac{1}{|V|}\int_{0}^{\infty}\left|\sum\limits_{k=1}^n(-1)^kV_ke^{-r_kx}\right|dx.$$

Let $h(x)=\sum\limits_{k=1}^n(-1)^kV_ke^{-r_kx},\,$ $x\in\mathbb{R}.$ Take $s\in X, \|s\|=1,$ and $\theta>0$
arbitrary chosen, and consider
$f:\mathbb{R}\to X$ given by
$$f(x)=\frac{\overline{h(x)}}{|h(x)|+\theta e^{-x}}s,\quad x\in\mathbb{R}.$$
Obviously, the function $f$ is continuous on $\mathbb{R}$ and $\|f(x)\|\leq 1$ for all $x\in\mathbb{R}.$

Let $\widetilde{y}$ be the solution of $D(y)=f,$ given by

\begin{equation}\label{ytilda}
\widetilde{y}(x)=\sum\limits_{k=1}^nC_ke^{r_kx}+\frac{1}{V}\sum\limits_{k=1}^n(-1)^{n+k}V_k
e^{r_kx}\int_{0}^{x}f(t)e^{-r_kt}dt
\end{equation}
with the constants
$$C_k=-(-1)^{n+k}\frac{V_k}{V}\int_{0}^{\infty}f(t)e^{-r_kt}dt,\quad 1\leq k\leq n.$$
The improper integrals in the definition of $C_k$ $1\leq k\leq n$ are obviously absolutely convergent since
$\|f(x)\|\leq 1,$ $x\in\mathbb{R},$ and $\Real r_k>0,$ $1\leq k\leq n.$
Then
\begin{eqnarray*}\label{rel17}
\widetilde{y}(x)&=&-\frac{1}{V}\sum\limits_{k=1}^n\left((-1)^{n+k}V_k\int_{0}^{\infty}f(t)e^{-r_kt}dt\right)e^{r_kx}\\
&+&
\frac{1}{V}\sum\limits_{k=1}^n(-1)^{n+k}V_k e^{r_k x}\int_{0}^{x}f(t)e^{-r_kt}dt\\
&=&-\frac{1}{V}\sum\limits_{k=1}^n(-1)^{n+k}V_k e^{r_kx}\int_{x}^{\infty}f(t)e^{-r_kt}dt\\
&=& \frac{(-1)^{n+1}}{V}\sum\limits_{k=1}^n(-1)^{k}V_k\int_{x}^{\infty}f(t)e^{r_k(x-t)}dt.
\end{eqnarray*}
Using the substitution $x-t=-u,$ $\widetilde{y}(x)$ becomes
\begin{equation}\label{ytilda2}
\widetilde{y}(x)=\frac{(-1)^{n+1}}{V}\sum\limits_{k=1}^n(-1)^{k}V_k\int_{0}^{\infty}f(x+u)e^{-r_ku}du,\quad x\in
\mathbb{R}.
\end{equation}
Since $f$ is bounded and $\Real r_k>0,$ $1\leq k\leq n,$ it follows that $\widetilde{y}(x)$ is bounded on
$\mathbb{R}$. Furthermore
 $\|D(\widetilde{y})\|_{\infty}\leq 1$ and the Ulam stability of $D$ for $\varepsilon =1$ with the constant $K,$
 leads to the existence of $y_H\in\Ker D,$ given by
$$y_{H}(x)=\sum\limits_{k=1}^nC_ke^{r_kx},\quad x\in\mathbb{R},$$ $C_k \in X,$ $1\leq k\leq n,$ with the
property
\begin{equation}\label{ytildalesthanl}
\|\widetilde{y}-y_H\|_{\infty}\leq K.
\end{equation}
If $(C_1,C_2,\ldots,C_n)\neq(0,0,\ldots,0)$ we get, in view of the boundedness of $\widetilde{y}$
\begin{equation}\label{liminf}
\lim\limits_{x\to\infty}\|\widetilde{y}(x)-y_H(x)\|=+\infty,
\end{equation}
contradiction with the existence of $K$ satisfying (\ref{ytildalesthanl}).
Therefore $C_1=C_2=\cdots=C_n=0,$ and the relation (\ref{ytildalesthanl}) becomes
\begin{eqnarray}\label{rel20}
\|\widetilde{y}(x)\|\leq K,\mbox{ for all } x\in\mathbb{R}.
\end{eqnarray}
Now let $x=0$ in (\ref{rel20}). We get, in view of (\ref{ytilda2}),

\begin{eqnarray*}
\frac{1}{|V|}\left\|\int_{0}^{\infty}\left(\sum\limits_{k=1}^n(-1)^{k}V_ke^{-r_ku}\right)f(u)du\right\|\leq K,
\end{eqnarray*}
or equivalently
\begin{equation}\label{stea}
\frac{1}{|V|}\left\|\int_{0}^{\infty}h(u)f(u)du\right\|=
\frac{1}{|V|}\int_{0}^{\infty}\frac{|h(u)|^2}{|h(u)|+\theta e^{-u}}du\leq K,\quad \forall \theta >0.
\end{equation}

Let $I(\theta)=\int_{0}^{\infty}\frac{|h(u)|^2}{|h(u)|+\theta e^{-u}}du$ and $I_0=\int_{0}^{\infty}|h(u)|du.$
We show that $\lim\limits_{\theta\to 0}I(\theta)=I_0$.
Indeed,
\begin{eqnarray*}
|I(\theta)-I_0|&\leq& \int_{0}^{\infty}\left|\frac{|h(u)|^2}{|h(u)|+\theta e^{-u}}-|h(u)|\right|du\\
&=&\theta\int_{0}^{\infty}\frac{|h(u)|e^{-u}}{|h(u)|+\theta e^{-u}}du\\
&\leq &\theta\int_{0}^{\infty}e^{-u}du=\theta,\quad \theta>0,
\end{eqnarray*}
consequently $\lim\limits_{\theta\to 0}I(\theta)=I_0.$
Letting $\theta\to 0$ in $(\ref{stea})$ we get $K_D\leq K,$ a contradiction to the supposition $K<K_D.$

\medskip

\item[ii)] The case $\Real r_k<0,$ $1\leq k\leq n,$ follows analogously.

Let $h(x)=\sum\limits_{k=1}^n(-1)^kV_ke^{r_kx},\,$ $x\in\mathbb{R}$ and $f$ be given by
$$f(x)=\frac{\overline{h(-x)}}{|h(-x)|+\theta e^{x}}s,$$ for $s\in X, \|s\|=1,$ $x\in\mathbb{R}$ and $\theta>0$
arbitrary chosen.
Obviously, the function $f$ is continuous on $\mathbb{R}$ and $\|f(x)\|\leq 1$ for all $x\in\mathbb{R}.$

Let $\widetilde{y}$ be the solution of $D(y)=f,$ given by

\begin{equation}\label{ytilda}
\widetilde{y}(x)=\sum\limits_{k=1}^nC_ke^{r_kx}+\sum\limits_{k=1}^n(-1)^{n+k}\frac{V_k}{V}e^{r_kx}\int_{0}^{x}f(t)e^{-r_kt}dt
\end{equation}
with the constants
$$C_k=(-1)^{n+k}\frac{V_k}{V}\int_{-\infty}^{0}f(t)e^{-r_kt}dt,\quad 1\leq k\leq n.$$
Using a similar reasoning as in the previous case, we obtain
$$\widetilde{y}(x)=
\frac{(-1)^{n}}{V}\int_{0}^{\infty}\left(\sum\limits_{k=1}^n(-1)^{k}V_ke^{r_ku}\right)f(x-u)du,\mbox{ }
x\in\mathbb{R}.$$
Since $f$ is bounded and $\Real r_k<0$ $1\leq k\leq n,$ it follows that $\widetilde{y}(x)$ is bounded on
$\mathbb{R}$. Furthermore
 $\|D(\widetilde{y})\|_{\infty}\leq 1$ and the Ulam stability of $D$ for $\varepsilon =1$ with the constant $K$
 leads to the existence of $y_H\in\Ker D,$ given by
$$y_{H}(x)=\sum\limits_{k=1}^n\widetilde{C_k}e^{r_kx},\quad x\in\mathbb{R},$$
$\widetilde{C_k} \in X,$ $1\leq k\leq n,$ such that
\begin{equation}\label{ytildalesthan2}
\|\widetilde{y}-y_H\|_{\infty}\leq K.
\end{equation}
If $(\widetilde{C_1},\widetilde{C_2,}\ldots\widetilde{,C_n})\neq(0,0,\ldots,0)$ it follows that
$\widetilde{y}-y_H$
is unbounded, a contradiction to the existence of $K$ satisfying (\ref{ytildalesthan2}).

Therefore $\widetilde{C_1}=\widetilde{C_2}=\cdots=\widetilde{C_n}=0,$ and the relation (\ref{ytildalesthan2})
becomes
\begin{eqnarray}\label{rel201}
\|\widetilde{y}(x)\|\leq K,\mbox{ for all } x\in\mathbb{R}.
\end{eqnarray}
Now let $x=0$ in (\ref{rel201}). We get

\begin{eqnarray*}
\frac{1}{|V|}\left\|\int_{0}^{\infty}\left(\sum\limits_{k=1}^n(-1)^{k}V_ke^{r_ku}\right)f(-u)du\right\|\leq K,
\end{eqnarray*}
or equivalently
\begin{equation}\label{stea2}
\frac{1}{|V|}\left\|\int_{0}^{\infty}h(u)f(-u)du\right\|=
\frac{1}{|V|}\int_{0}^{\infty}\frac{|h(u)|^2}{|h(u)|+\theta e^{-u}}du\leq K,\quad \forall \theta >0.
\end{equation}

Let $I(\theta)=\int_{0}^{\infty}\frac{|h(u)|^2}{|h(u)|+\theta e^{-u}}du$ and $I_0=\int_{0}^{\infty}|h(u)|du.$
The arguments used in the proof of the previous case lead to $\lim\limits_{\theta\to 0}I(\theta)=I_0$.

Letting $\theta\to 0$ in $(\ref{stea2})$ we get $K_D\leq K,$ a contradiction to the supposition $K<K_D.$

\item[iii)]

Consider $\Real r_k>0,$ $1\leq k\leq p$ and $\Real r_k<0,$ $p+1\leq k\leq n.$
Let $$h_1(x)=\sum\limits_{k=1}^p(-1)^kV_ke^{r_kx},\quad h_2(x)=\sum\limits_{k=p+1}^n(-1)^kV_ke^{-r_kx},\quad
x\in\mathbb{R}.$$

Take an arbitrary $\theta>0,$ $s\in X,$ $\|s\|=1$ and define

\begin{eqnarray}\label{f(x)3}
f(x)=
\left\{\begin{array}{ll}
\frac{\overline{h_1(-x)}}{|h_1(-x)|+\theta e^{x}}s,& \mbox{ if } x\in (-\infty,-\theta]\\
\frac{-\overline{h_2(x)}}{|h_2(x)|+\theta e^{-x}}s,& \mbox{ if } x\in [\theta,+\infty)\\
\varphi(x),& \mbox{ if } x\in (-\theta,\theta).
\end{array} \right.
\end{eqnarray}

where $\varphi:(-\theta,\theta)\to X$ is an affine function chosen such that $f$ is continuous on $\mathbb{R}.$
Remark that $\|f\|_{\infty}\leq 1.$

Let $\widetilde{y}$ be the solution of $D(y)=f,$ given by

\begin{equation}\label{ytilda}
\widetilde{y}(x)=\sum\limits_{k=1}^n C_k
e^{r_kx}+\frac{1}{V}\sum\limits_{k=1}^n(-1)^{n+k}V_ke^{r_kx}\int_{0}^{x}f(t)e^{-r_kt}dt
\end{equation}
with the constants
\begin{eqnarray*}
C_k&=&-(-1)^{n+k}\frac{V_k}{V}\int_{0}^{\infty}f(t)e^{-r_kt}dt,\quad 1\leq k\leq p.\\
C_k&=&(-1)^{n+k}\frac{V_k}{V}\int_{-\infty}^{0}f(t)e^{-r_kt}dt,\quad p+1\leq k\leq n.
\end{eqnarray*}
Consequently
\begin{equation*}
\widetilde{y}(x)=\frac{(-1)^n}{V}\displaystyle{\int}_{0}^{\infty}\bigg(\bigg(\sum\limits_{k=p+1}^{n}(-1)^kV_ke^{r_ku}\bigg)f(x-u)-\bigg(\sum\limits_{k=1}^{p}(-1)^kV_ke^{-r_ku}\bigg)f(x+u)\bigg)du.
\end{equation*}
Since $f$ is bounded, taking account of the sign of $\Real r_k,$ $1\leq k\leq n,$ it follows that
$\widetilde{y}(x)$ is bounded. The relation $\|D(y)\|_{\infty}=\|f\|_{\infty}<1$ and the stability of $D$  for
$\varepsilon =1$ with the Ulam constant $K,$ leads to the existence of an exact solution $y_H\in \Ker D$ given
by
$$y_{H}(x)=\sum\limits_{k=1}^n\widetilde{C}_ke^{r_kx},\quad x\in\mathbb{R},$$
such that
\begin{eqnarray}\label{stea3}
\|\widetilde{y}-y_H\|_{\infty}\leq K.
\end{eqnarray}

For $(\widetilde{C_1}, \widetilde{C_2},\ldots,\widetilde{C_n})\neq (0,0,\ldots,0),$ the solution $y_H$ is
unbounded, therefore the relation $(\ref{stea3})$ is true only for $y_H(x)=0$ $x\in\mathbb{R}.$
Consequently relation $(\ref{stea3})$ becomes
\begin{equation}\label{relcuyo}
\|\widetilde{y}(x)\|\leq K, \quad x\in\mathbb{R}.
\end{equation}
For $x=0$ we get $\|\widetilde{y}(0)\|\leq K.$ But
\begin{align*}
\widetilde{y}(0)&=\frac{(-1)^n}{V}\int_{0}^{\infty}\bigg(h_1(u)f(-u)-h_2(u)f(u)\bigg)du \\
&=\frac{(-1)^n}{V}\left\{\int_{\theta}^{\infty}h_1(u)f(-u)du-\int_{\theta}^{\infty}h_2(u)f(u)du\right\}\\
&+\frac{(-1)^n}{V}\int_{0}^{\theta}\bigg(h_1(u)f(-u)-h_2(u)f(u)\bigg)du\\
&=\frac{(-1)^n}{V}\left\{\int_{\theta}^{\infty}\frac{|h_1(u)|^2}{|h_1(u)|+\theta
e^{-u}}du+\int_{\theta}^{\infty}\frac{|h_2(u)|^2}{|h_2(u)|+\theta
e^{-u}}du\right\}\\
&+\frac{(-1)^n}{V}\int_{0}^{\theta}\bigg(h_1(u)f(-u)-h_2(u)f(u)\bigg)du.
\end{align*}
Analogously to the previous cases it can be proved that if $\theta\to 0$ then
\begin{eqnarray*}
\int_{\theta}^{\infty}\frac{|h_1(u)|^2}{|h_1(u)|+\theta e^{-u}}du&\longmapsto& \int_{0}^{\infty}|h_1(u)|du\\
\int_{\theta}^{\infty}\frac{|h_2(u)|^2}{|h_2(u)|+\theta e^{-u}}du&\longmapsto& \int_{0}^{\infty}|h_2(u)|du,
\end{eqnarray*}
and
$$\int_{0}^{\theta}\bigg(h_1(u)f(-u)-h_2(u)f(u)\bigg)du\longmapsto 0,$$ in view of the relation
 $$\|f(u)\|=\|\varphi(u)\|\leq 1, \quad u\in [-\theta,\theta].$$
 Hence, letting now $\theta\to 0$ in $(\ref{relcuyo})$ we get $K_D<K,$ a contradiction.
\end{enumerate}
\end{proof}

\begin{theorem}\label{thA}
If $r_k,$ $1\leq k\leq n,$ are real and distinct roots of the characteristic equation and $a_n\neq 0,$ then the
best Ulam constant of the operator $D$ is
\begin{equation}\label{ecDreal}
K_D=\frac{1}{|\prod\limits_{k=1}^{n}r_k|}=\frac{1}{|a_n|}.
\end{equation}
\end{theorem}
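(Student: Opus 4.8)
The plan is to deduce this from Theorem~\ref{th22}. Since the $r_k$ are real, distinct and $a_n\neq 0$, none of them is zero, so $\Real r_k=r_k\neq 0$ and Theorem~\ref{th22} applies; after relabelling the roots we may assume $r_1,\dots,r_p>0>r_{p+1},\dots,r_n$ (the first two branches of the formula being the degenerate cases $p=n$ and $p=0$). From $P(z)=\prod_{k=1}^n(z-r_k)$ we get $a_n=(-1)^n\prod_{k=1}^n r_k$, hence $|a_n|=\prod_{k=1}^n|r_k|$. The key observation is that, in each of the three branches, the constant produced by Theorem~\ref{th22} equals $\|g\|_{L^1(\R)}$, where $g\in L^1(\R)$ is the kernel implicit in the proof of Theorem~\ref{th21}: the bounded solution of $D(y)=f$ satisfies $(y-y_H)(x)=\int_{\R}g(x-t)f(t)\,dt$, and on each half-line $g$ is the explicit combination of the exponentials $e^{\pm r_k x}$ appearing there (a short check matches the integrand in each branch with $|g(u)|$ after the substitution $u\mapsto -u$ on the negative half-line). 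So it suffices to prove $\|g\|_{L^1(\R)}=1/|a_n|$.

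To compute this norm, factor $D=\bigl(\frac{d}{dx}-r_1\bigr)\cdots\bigl(\frac{d}{dx}-r_n\bigr)$ and let $g_r$ be the kernel of $\frac{d}{dx}-r$: namely $g_r(t)=-e^{rt}$ for $t<0$ and $g_r(t)=0$ for $t\ge 0$ when $r>0$, and $g_r(t)=e^{rt}$ for $t>0$ and $g_r(t)=0$ for $t\le 0$ when $r<0$. In either case $g_r\in L^1(\R)$ is of constant sign, $\bigl(\frac{d}{dx}-r\bigr)(g_r*h)=h$ for every bounded continuous $h$, and $\int_{\R}g_r=-1/r$. Because each $g_{r_k}$ is integrable, $g_{r_1}*\cdots*g_{r_n}$ is a well-defined, bounded element of $L^1(\R)$, and $D\bigl((g_{r_1}*\cdots*g_{r_n})*f\bigr)=f$ with bounded left-hand side whenever $f$ is bounded; by uniqueness of the bounded solution of $D(y)=f$ (two such solutions differ by a bounded element of $\Ker D$, which must vanish exactly as in the uniqueness part of Theorem~\ref{th21}) this forces $g=g_{r_1}*\cdots*g_{r_n}$.

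Now a convolution of finitely many integrable functions each of one sign is again of one sign, equal to the product of their signs; hence $g$ is of constant sign, and, using $\int_{\R}(u*v)=\bigl(\int_{\R}u\bigr)\bigl(\int_{\R}v\bigr)$,
\[
\|g\|_{L^1(\R)}=\Bigl|\int_{\R}g\Bigr|=\prod_{k=1}^n\Bigl|\int_{\R}g_{r_k}\Bigr|=\prod_{k=1}^n\frac{1}{|r_k|}=\frac{1}{|a_n|}.
\]
Combined with the first paragraph this gives $K_D=1/|a_n|=1/\bigl|\prod_{k=1}^n r_k\bigr|$. (Equivalently, $\int_{\R}g=1/a_n$ because the unique bounded solution of $D(y)=1$ is the constant $1/a_n$.)

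For the first two branches a more hands-on argument is also available: the exponential sum $\phi(x)=\sum_{k}(-1)^{n+k}V_k e^{\mp r_k x}$ has $\phi^{(j)}(0)=0$ for $0\le j\le n-2$ — a Vandermonde determinant with a repeated row — and $\phi^{(n-1)}(0)=\pm V\neq 0$, so, having $n$ distinct frequencies, $\phi$ has no zero on $(0,\infty)$; then $\int_0^\infty|\phi|=\bigl|\int_0^\infty\phi\bigr|$, and $\sum_k(-1)^{n+k}V_k/r_k$ is evaluated as $(-1)^{n-1}V/\prod_k r_k$ by scaling the columns of the Vandermonde matrix and cyclically permuting its rows, yielding $K_D=1/|a_n|$ once more. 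The step I expect to need the most care is precisely the identification $g=g_{r_1}*\cdots*g_{r_n}$, equivalently the direct treatment of the mixed branch: there the two summands under the integral need not separately be of constant sign, and the zero-counting argument fails for them because the coefficients $V_k$ involve all of the roots, not merely those of one sign.
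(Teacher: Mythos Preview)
Your argument is correct and takes a genuinely different route from the paper. The paper's proof of Theorem~\ref{thA} is a two-line lower-bound computation: test with the constant function $\widetilde y=\varepsilon/a_n$, observe $D(\widetilde y)=\varepsilon$, and use unboundedness of nonzero elements of $\Ker D$ to force $y_H=0$, yielding $K_D\ge 1/|a_n|$; the matching upper bound is borrowed from the known Ulam constant $1/\prod_k|\Real r_k|$ of \cite{miura1}, which for real roots is exactly $1/|a_n|$. You instead start from the integral formula of Theorem~\ref{th22} and evaluate it by recognising it as $\|g\|_{L^1(\R)}$ for the Green kernel $g=g_{r_1}*\cdots*g_{r_n}$; since each $g_{r_k}$ has constant sign, so does $g$, whence $\|g\|_{L^1}=|\int_\R g|=\prod_k|\int_\R g_{r_k}|=\prod_k 1/|r_k|$. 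The paper's approach is shorter and avoids any analysis of the kernel, but yours is more informative: it evaluates the integral in \eqref{const-k1} directly (so Proposition~\ref{thB} falls out without the detour through Theorem~\ref{thA}), and it explains \emph{why} the real-root case is special --- the Green kernel is then single-signed. Your Descartes-rule remark for the unmixed branches is also correct (an exponential sum with $n$ distinct real frequencies has at most $n-1$ real zeros, and $\phi$ already has an $(n-1)$-fold zero at $0$), but, as you note, it does not extend to the mixed branch; the convolution argument handles all three cases at once. The identification $g=g_{r_1}*\cdots*g_{r_n}$ is the only place requiring a word of justification: it follows because both kernels produce, upon convolution with any bounded continuous $f$, a bounded solution of $D(y)=f$, and such a solution is unique by the argument in the uniqueness part of Theorem~\ref{th21}.
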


\begin{proof}
Suppose that $D$ admits an Ulam constant $K<K_D$.
Let $\varepsilon>0$ and $$\widetilde{y}(x)=\frac{\varepsilon}{a_n},\quad x\in \mathbb{R}.$$
Then $\|D(\widetilde{y})\|_{\infty}=\varepsilon$ and since $D$ is Ulam stable with the constant $K$ it follows that
there exists $y_H\in \Ker D$ such that
\begin{equation}\label{eq2A}
\|\widetilde{y}-y_H\|_{\infty}\leq K\varepsilon.
\end{equation}

Clearly if $y_H$ is not identically $0\in X,$ then it is unbounded so relation $(\ref{eq2A})$ cannot hold.
Therefore $y_H(x)=0$ for all $x\in \mathbb{R}$ and relation $(\ref{eq2A})$ becomes $\|\widetilde{y}\|_{\infty}\leq
K\varepsilon,$ or $K_D\leq K,$ a contradiction.
\end{proof}

The previous results lead to the following identity.

\begin{proposition}\label{thB}
If $r_k,$ $1\leq k\leq n,$ are real distinct, nonzero numbers then
\begin{align}
\frac{1}{|r_1r_2\cdots r_n|}&=K_D,
\end{align}
where $K_D$ is given by $(\ref{const-k1}).$
\end{proposition}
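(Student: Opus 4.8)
The plan is to obtain the identity as an immediate consequence of Theorem~\ref{thA} and Theorem~\ref{th22}, both of which are applicable under the present hypotheses. First I would record the trivial remarks that make this possible: since the $r_k$ are real one has $\Real r_k=r_k$, so the assumption $r_k\neq 0$ guarantees $\Real r_k\neq 0$ and places us in the setting of Theorem~\ref{th22}; moreover $a_n=(-1)^n r_1 r_2\cdots r_n\neq 0$, which is exactly the hypothesis of Theorem~\ref{thA}.

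Then, applying Theorem~\ref{thA} gives $K_D=\dfrac{1}{|r_1 r_2\cdots r_n|}=\dfrac{1}{|a_n|}$, while applying Theorem~\ref{th22} gives that this same best Ulam constant equals the piecewise expression $(\ref{const-k1})$. For real nonzero roots exactly one of the three sign patterns in $(\ref{const-k1})$ occurs --- all $r_k>0$, all $r_k<0$, or a genuine mixture --- so equating the two expressions for $K_D$ yields the claimed identity in each case. In the mixed case one must first relabel the roots so that $r_1,\dots,r_p$ are the positive ones and $r_{p+1},\dots,r_n$ the negative ones before quoting Theorem~\ref{th22}; this is legitimate because $K_D$ depends only on the operator $D$ from $(\ref{operator})$ and not on the numbering of the roots, and because the integrand in $(\ref{const-k1})$ is itself invariant under permutations of the $r_k$: indeed $\frac{1}{V}\sum_{k=1}^n(-1)^{n+k}V_k e^{r_k s}$ is the Green kernel appearing in $(\ref{forma sol particulara})$, and it is intrinsically determined by $D$, so $\frac{1}{|V|}\big|\sum_{k=1}^n(-1)^k V_k e^{\mp r_k x}\big|$ is unchanged by permutations. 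Apart from this bookkeeping there is no obstacle: the proposition is essentially the observation that the two independent evaluations of $K_D$ in Theorems~\ref{thA} and \ref{th22} must agree.

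If instead one wanted a self-contained analytic proof not invoking Theorem~\ref{thA}, the actual work would be to evaluate the integrals in $(\ref{const-k1})$ directly, and this is where a proof would be delicate. In the case $r_k>0$ one would first argue that $h(x)=\sum_{k=1}^n(-1)^k V_k e^{-r_k x}$ has constant sign on $(0,\infty)$: the associated exponential sum $\frac{1}{V}\sum_k(-1)^{n+k}V_k e^{r_k s}$ vanishes to order $n-1$ at $s=0$, and a linear combination of $n$ exponentials with distinct real exponents has at most $n-1$ real zeros, so it cannot vanish elsewhere. Consequently $\frac{1}{|V|}\int_0^\infty |h(x)|\,dx=\big|\frac{1}{V}\sum_{k=1}^n(-1)^{n+k}\frac{V_k}{r_k}\big|$, and the proof would then reduce to a Vandermonde determinant identity of the form $\sum_{k=1}^n(-1)^{n+k}V_k\prod_{j\neq k}r_j=\pm V$, provable by Cramer's rule or Lagrange interpolation; the cases $r_k<0$ and the mixed case are handled by the same device, the latter splitting additively into two such integrals. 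This is precisely the computation that the argument via Theorem~\ref{thA} makes unnecessary.
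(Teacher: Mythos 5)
Your argument is correct and is essentially the paper's own proof: the proposition follows by equating the two evaluations of the best Ulam constant, the integral expression of Theorem~\ref{th22} and the value $1/|a_n|=1/|r_1r_2\cdots r_n|$ of Theorem~\ref{thA}. Your extra remarks (that $a_n=(-1)^n r_1\cdots r_n\neq0$, that $\Real r_k=r_k\neq0$, and the relabeling in the mixed case) and the sketched direct evaluation of the integrals are fine but not needed beyond what the paper does.
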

\begin{proof}
For real and distinct roots $r_k,$ $1\leq k\leq n,$ of the characteristic equation, the best Ulam constant is given
on one hand by
relation $(\ref{const-k1}),$ Theorem \ref{th22} and on the other hand by relation $(\ref{ecDreal})$ in Theorem
\ref{thA}.
\end{proof}

Next, we get as well an explicit representation of the best Ulam constant for the case of complex and distinct
roots of the characteristic equation having the same imaginary part.

\begin{theorem}
If the characteristic equation of $D$ admits outside of the imaginary axis distinct roots having the same imaginary
part, then the best Ulam constant of $D$ is  given by
\begin{equation}
K_D=\frac{1}{\prod\limits_{k=1}^{n}|\Real r_k|}.
\end{equation}
\end{theorem}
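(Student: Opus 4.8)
The plan is to reduce the evaluation of the constant $(\ref{const-k1})$ furnished by Theorem \ref{th22} to the already-settled case of real roots, by a direct manipulation of the Vandermonde determinants and of the integrands, exploiting the fact that a common imaginary part of the roots disappears from both.

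Write $r_k=\alpha_k+i\beta$, $1\le k\le n$, where $\beta\in\mathbb{R}$ is the common imaginary part and $\alpha_k=\Real r_k$. By hypothesis the $r_k$ are distinct and lie off the imaginary axis, hence the $\alpha_k$ are pairwise distinct and nonzero. The first observation is that the Vandermonde determinants occurring in $(\ref{const-k1})$ depend only on differences of roots and $r_j-r_i=\alpha_j-\alpha_i$; therefore $V=V(r_1,\ldots,r_n)=V(\alpha_1,\ldots,\alpha_n)$ and, for each $k$, $V_k=V(r_1,\ldots,r_{k-1},r_{k+1},\ldots,r_n)=V(\alpha_1,\ldots,\alpha_{k-1},\alpha_{k+1},\ldots,\alpha_n)$. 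In other words, all the quantities $V,V_k$ coincide with the Vandermonde determinants attached to the real numbers $\alpha_1,\ldots,\alpha_n$. Next, after relabelling the roots we may assume $\alpha_1,\ldots,\alpha_p>0$ and $\alpha_{p+1},\ldots,\alpha_n<0$ (the two pure-sign cases being $p=n$ and $p=0$). In each sum appearing in the integrand of $(\ref{const-k1})$ we factor the oscillatory exponential, $e^{-r_kx}=e^{-i\beta x}e^{-\alpha_kx}$ and $e^{+r_kx}=e^{+i\beta x}e^{+\alpha_kx}$, so that $\sum_{k=1}^p(-1)^kV_ke^{-r_kx}=e^{-i\beta x}\sum_{k=1}^p(-1)^kV_ke^{-\alpha_kx}$ and similarly for the second sum with factor $e^{+i\beta x}$. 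Since $|e^{\pm i\beta x}|=1$, the modulus of each sum is unchanged when every $r_k$ is replaced by $\alpha_k$. Consequently the integrand in $(\ref{const-k1})$ written for $D$ agrees pointwise with the integrand of the same formula written for the real numbers $\alpha_1,\ldots,\alpha_n$, whence $K_D$ equals precisely the value that $(\ref{const-k1})$ assigns to an operator whose characteristic roots are the distinct nonzero reals $\alpha_1,\ldots,\alpha_n$.

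To conclude, I invoke Proposition \ref{thB} (equivalently, Theorems \ref{th22} and \ref{thA} taken together): for real, distinct, nonzero numbers $\alpha_1,\ldots,\alpha_n$, the right-hand side of $(\ref{const-k1})$ equals $\frac{1}{|\alpha_1\alpha_2\cdots\alpha_n|}$. Hence $K_D=\frac{1}{\prod_{k=1}^n|\Real r_k|}$, as asserted. I do not expect a genuine obstacle here; the only points that call for a little care are checking that the case distinction in $(\ref{const-k1})$ is compatible with relabelling the roots, so that the reduction lands exactly on the hypotheses of Proposition \ref{thB}, and verifying that pulling $e^{\pm i\beta x}$ out of each absolute value is legitimate (which it is, the factor having modulus one) — both are routine.
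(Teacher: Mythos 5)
Your proposal is correct and follows essentially the same route as the paper: reduce to the real parts by noting that the common imaginary part cancels in the Vandermonde determinants (which depend only on differences of the roots) and contributes only a unimodular factor $e^{\pm i\beta x}$ inside the absolute values, then apply the real-root identity of Proposition \ref{thB} (Theorems \ref{th22} and \ref{thA} combined). Your write-up merely supplies these verifications more explicitly than the paper's terse argument.
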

\begin{proof}
Suppose that $r_k=\rho_k+i\alpha,$ $\rho_k\in\mathbb{R}\setminus\{0\},$ $1\leq k\leq n,$ $\alpha\in\mathbb{R}.$
Then the best Ulam constant of $D$ is given by $(\ref{const-k1})$ with $r_k=\rho_k,$ $1\leq k\leq n,$ and
$V=V(\rho_1,\rho_2,\ldots,\rho_n),$
$V_k=(\rho_1,\ldots,\rho_{k-1},\rho_{k+1},\ldots,\rho_n).$
Now taking account of Theorem \ref{th22} it follows
$$K_D=\frac{1}{\prod\limits_{k=1}^{n}|\rho_k|}.$$
\end{proof}

Theorem \ref{th22} is an extension of the result given in \cite{baias-popa2} for
distinct roots of the characteristic equation. Indeed, the particular case $n = 2$
corresponds to the second order linear differential operator.
\begin{equation}
D(y)=y''+a_1y'+a_2y,\quad a_1,a_2\in\mathbb{C},
\end{equation}
and the best Ulam constant in this case is
\begin{eqnarray}\label{const-k2}
K_{D}=
\left\{\begin{array}{ll}
\frac{1}{|r_1-r_2|}\int_{0}^{\infty}|e^{-r_1x}-e^{-r_2x}|dx,& \mbox{ if } \Real r_1>0, \Real r_2>0,\\
\frac{1}{|r_1-r_2|}\int_{0}^{\infty}|e^{r_1x}-e^{r_2x}|dx,& \mbox{ if } \Real r_1<0, \Real r_2<0,\\
\frac{1}{|r_1-r_2|}\left|\frac{1}{\Real r_1}-\frac{1}{\Real r_2}\right|,& \mbox{ if } \Real r_1\cdot\Real r_2<0
\end{array} \right.
\end{eqnarray}

An explicit representation of $K_D$ for the second order linear differential operator with real coefficients is
given in the next theorem.

\begin{theorem}
If $D(y)=y''+a_1y'+a_2y,$ $a_1,a_2\in\mathbb{R}\setminus\{0\},$ then the best Ulam constant of the operator is
\begin{eqnarray}\label{const-k2}
K_{D}=
\left\{\begin{array}{ll}
\frac{1}{|a_2|},& \mbox{ if } a_1^2-4a_2\geq 0,\\
\frac{1}{a_2}\coth\frac{|a_1|\pi}{\sqrt{4a_2-a_1^2}},& \mbox{ if } a_1^2-4a_2<0.
\end{array} \right.
\end{eqnarray}
\end{theorem}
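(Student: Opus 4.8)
The plan is to specialise Theorem~\ref{th22} to $n=2$ and to evaluate the resulting integrals in closed form. First note that the standing hypotheses $a_1,a_2\in\mathbb{R}\setminus\{0\}$ force both roots of $P(z)=z^2+a_1z+a_2$ off the imaginary axis: their product is $a_2\neq 0$, so neither root is $0$; and if the roots form a complex-conjugate pair $\rho\pm i\beta$ then $\rho=-a_1/2\neq 0$, whereas a purely imaginary pair $\pm i\beta$ would force $a_1=0$. Hence Theorem~\ref{th22} applies with $n=2$, where $V=r_2-r_1$ and $V_1=V_2=1$, and it only remains to compute the value of $K_D$ furnished there in each of the three regimes for the roots.

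\emph{Case $a_1^2-4a_2\ge 0$ (real roots).} Since $r_1r_2=a_2\neq 0$, the roots are real and nonzero. If $a_1^2-4a_2>0$ they are also distinct, and Theorem~\ref{thA} gives at once $K_D=1/|r_1r_2|=1/|a_2|$. If $a_1^2=4a_2$ there is a double root $r=-a_1/2\neq 0$: the inequality $K_D\ge 1/|a_2|$ follows exactly as in the proof of Theorem~\ref{thA}, by feeding the constant approximate solution $\widetilde y\equiv\varepsilon/a_2$ (for which $D(\widetilde y)\equiv\varepsilon$) into Definition~\ref{def2} and using that any nonzero element of $\Ker D$ is unbounded; while $K_D\le 1/|\Real r|^2=1/|a_2|$ because $1/\prod_k|\Real r_k|$ is always an Ulam constant of $D$ (the Miura--Miyajima--Takahasi estimate recalled from \cite{miura1}, which holds for repeated roots as well). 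So $K_D=1/|a_2|$ throughout this case.

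\emph{Case $a_1^2-4a_2<0$ (complex roots).} Here $a_2>0$ and the roots are $r_{1,2}=\rho\pm i\beta$ with $\rho=-a_1/2\neq 0$ and $\beta=\frac12\sqrt{4a_2-a_1^2}>0$; in particular $\rho^2+\beta^2=a_2$, $|r_1-r_2|=2\beta$, and the two roots share the real part $\rho$, so the third branch of Theorem~\ref{th22} (which requires $\Real r_1\cdot\Real r_2<0$) cannot occur. When $a_1<0$, i.e.\ $\rho>0$, the first branch applies, and since
$$e^{-r_1x}-e^{-r_2x}=e^{-\rho x}\bigl(e^{-i\beta x}-e^{i\beta x}\bigr)=-2ie^{-\rho x}\sin\beta x,$$
we get $K_D=\frac{1}{2\beta}\int_0^\infty 2e^{-\rho x}|\sin\beta x|\,dx=\frac1\beta\int_0^\infty e^{-|\rho|x}|\sin\beta x|\,dx$. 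When $a_1>0$, i.e.\ $\rho<0$, the second branch applies and the same manipulation with $e^{r_kx}$ in place of $e^{-r_kx}$ yields the identical expression (again $e^{\rho x}=e^{-|\rho|x}$). Thus in both sub-cases
$$K_D=\frac1\beta\int_0^\infty e^{-|\rho|x}\,|\sin\beta x|\,dx.$$

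The remaining work — and the only genuine computation — is the identity
$$\int_0^\infty e^{-cx}|\sin bx|\,dx=\frac{b}{b^2+c^2}\,\coth\frac{c\pi}{2b},\qquad b,c>0,$$
which I would establish by splitting $[0,\infty)$ into the intervals $\bigl[k\pi/b,(k+1)\pi/b\bigr]$, on which $|\sin bx|=(-1)^k\sin bx$, integrating $e^{-cx}\sin bx$ on each by its elementary primitive, and summing the resulting geometric series $\sum_{k\ge 0}e^{-ck\pi/b}=(1-e^{-c\pi/b})^{-1}$; the factor $\frac{1+e^{-c\pi/b}}{1-e^{-c\pi/b}}=\coth\frac{c\pi}{2b}$ then emerges. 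Applying this with $c=|\rho|$ and $b=\beta$, and simplifying via $\rho^2+\beta^2=a_2$, $|\rho|=|a_1|/2$ and $2\beta=\sqrt{4a_2-a_1^2}$, yields the asserted closed form of $K_D$ in the oscillatory case, which together with the previous case completes the proof. I do not anticipate a serious obstacle: apart from the bookkeeping of the case split (sign of the discriminant, and — in the complex case — the sign of $a_1$ selecting the branch of Theorem~\ref{th22}), everything reduces to the clean evaluation of that single exponential--sine integral.
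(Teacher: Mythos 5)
Your route is essentially the paper's: split on the sign of $\delta=a_1^2-4a_2$, invoke the best-constant formulas already established for the second-order case, and in the oscillatory case reduce to $\frac{1}{\alpha^2+\beta^2}\coth\frac{|\alpha|\pi}{2|\beta|}$ via the integral $\int_0^\infty e^{-px}|\sin x|\,dx=\frac{1}{1+p^2}\coth\frac{p\pi}{2}$ (which the paper simply quotes and you prove by summing over half-periods — a harmless addition). The one structural difference is the case $a_1^2=4a_2$: the paper disposes of all of $\delta\ge 0$ by citing \cite[Theorem 3]{baias-popa2}, whereas you correctly observe that Theorems \ref{th22} and \ref{thA} require distinct roots and patch the double root yourself, combining the constant-function lower bound $K\ge 1/|a_2|$ with the upper bound $1/\prod_k|\Real r_k|=1/|a_2|$ recalled from \cite{miura1}; this makes your treatment more self-contained than the paper's on that subcase.

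One concrete caveat at the very last step, which you share with the paper's own final line: your computation gives the $\coth$ argument $\frac{|\rho|\pi}{2\beta}$ with $|\rho|=|a_1|/2$ and $\beta=\frac{1}{2}\sqrt{4a_2-a_1^2}$, i.e.
\begin{equation*}
K_D=\frac{1}{a_2}\coth\frac{|a_1|\pi}{2\sqrt{4a_2-a_1^2}},
\end{equation*}
not the displayed $\frac{1}{a_2}\coth\frac{|a_1|\pi}{\sqrt{4a_2-a_1^2}}$; a quick check with $a_1=-2$, $a_2=2$ (roots $1\pm i$) gives $K_D=\frac12\coth\frac{\pi}{2}$, consistent with the former and not the latter. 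So your sentence ``simplifying \dots yields the asserted closed form'' hides exactly the factor of $2$ that also disappears between the penultimate and final expression in the paper's proof; the method is fine, but the statement as printed needs the $2$ restored in the denominator of the $\coth$ argument.
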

\begin{proof}
Let $\delta=a_1^2-4a_2.$
\begin{itemize}
\item[i)] If $\delta\geq 0$ then $r_1, r_2\in\mathbb{R}$ and in view  of \cite[Theorem 3]{baias-popa2}
    $$K_D=\frac{1}{|r_1r_2|}=\frac{1}{|a_2|}.$$
\item[ii)] If $\delta<0$ then $r_{1,2}=\alpha\pm i\beta,$ $\alpha,\beta \in\mathbb{R},$ $\beta\neq 0.$

Suppose first $\alpha>0.$ Then
\begin{eqnarray*}
K_D&=& \frac{1}{2|\beta|}\int_{0}^{\infty}e^{-\alpha x}|e^{-i\beta x}-e^{i\beta x}|dx\\
&=&\frac{1}{2|\beta|}\int_{0}^{\infty}e^{-\alpha x}|-2i \sin \beta
x|dx=\frac{1}{|\beta|}\int_{0}^{\infty}e^{-\alpha x}|\sin(|\beta|x)|dx.
\end{eqnarray*}
Now, letting $|\beta|x=t$ in the above integral we obtain, taking account of $$\int_{0}^{\infty}e^{-px}|\sin
x|dx=\frac{1}{1+p^2}\coth \frac{p\pi}{2},\quad p>0,$$
$$K_D=\frac{1}{\beta^2}\int_{0}^{\infty}e^{-\frac{\alpha}{|\beta|} t}|\sin
t|dt=\frac{1}{\alpha^2+\beta^2}\coth\frac{\alpha}{2|\beta|}\pi=\frac{1}{a_2}\coth\frac{|a_1|\pi}{\sqrt{4a_2-a_1^2}}.$$
Analogously for $\alpha<0.$
\end{itemize}
\end{proof}

It will be interesting to obtain a closed form (if possible) for the best Ulam
constant of the $n$ order differential operator also for the case of multiple roots of the characteristic equation.
This problem might be quite challenging and we intend to leave it, for the moment, as an open
problem.

\end{document}